\documentclass[12pt]{amsart}

\usepackage{graphicx}
\usepackage{geometry}
\usepackage{amsthm,amssymb,mathrsfs}
\usepackage{amsmath,amscd}
\usepackage{graphicx}
\usepackage{xcolor}
\usepackage{pinlabel}
\usepackage{fourier}
\usepackage[breaklinks,colorlinks,citecolor=blue,linkcolor=blue,
 urlcolor=teal]{hyperref}
\usepackage[all]{xy}

\newtheorem{theorem}{Theorem}[section]
\newtheorem{lemma}[theorem]{Lemma}

\theoremstyle{definition}
\newtheorem{definition}[theorem]{Definition}

\theoremstyle{remark}
\newtheorem{remark}[theorem]{Remark}

\newtheorem{corollary}[theorem]{Corollary}

\numberwithin{equation}{section}

\geometry{a4paper, left = 2cm, right = 2cm}

\DeclareMathOperator{\im}{im}
\DeclareMathOperator{\coker}{coker}
\DeclareMathOperator{\Res}{Res}
\DeclareMathOperator{\Ind}{Ind}



\begin{document}
\title{Stability for Representations of Hecke Algebras of type $A$}

\author{Kun Wang$^1$, Haitao Ma$^2$ and Zhu-Jun Zheng$^1$}
\address{1. Mathematics Department of South China University\\
Guangzhou, P. R. China, 510641}
\email{zhengzj@scut.edu.cn}
\address{2. College Of Science, Harbin Engineering University, Harbin, China, 150001}
\email{hmamath@163.com}

\thanks{The authors was supported in part by NSFC Grant \#11571119 and 11475178.}
\thanks{zhengzj@scut.edu.cn}


\date{}

\maketitle

\begin{abstract}
In this paper we introduce the notion of the stability of a sequence of modules over Hecke algebras. We prove that a finitely generated consistent sequence associated with Hecke algebras is representation stable.

\noindent Keywords: Hecke algebra, representation stability
\end{abstract}

\maketitle

\section{Introduction}
\noindent In order to compare the representations $V_n$ of a natural family groups $G_n$, such as symmetric group $\mathfrak{S}_n,$ $GL_n(\mathbb{Q}),$ $SL_n(\mathbb{Q})$, Church and Farb introduced the theory of \emph{representation stability} in their paper \cite{4}. The idea is mainly to study the consistent sequence of representations $V_n$ of groups $G_n$. Let $G_n$ be one of the families $\mathfrak{S}_n,$ $GL_n(\mathbb{Q}),$ $SL_n(\mathbb{Q})$. A sequence of $G_n$-representations
$$V_0\overset{\phi_0}\longrightarrow V_1\overset{\phi_1}\longrightarrow V_2\overset{\phi_2}\longrightarrow\cdots$$
is called \emph{consistent sequence} if for all $n\geq0$ and $g\in G_n$, the following diagram
 $$\CD
   V_n @>\phi_n>> V_{n+1} \\
   @V g VV @V g VV  \\
   V_n @>\phi_n>> V_{n+1}
 \endCD
 $$
is commutative, where each $\phi_n$ is a linear map. A consistent sequence is called representation stable, if for sufficiently large $n$, it satisfies:
\begin{itemize}
  \item \textbf{Injectivity}: The map $\phi_n$ is injective.
  \item \textbf{Surjectivity}: The span of the image of $\phi_n$ is equal to $V_{n+1}$.
  \item \textbf{Multiplicities}: Decompose $V_n$ into irreducible representations as
$$V_n=\bigoplus\limits_\lambda c_{\lambda,n}V(\lambda)_n$$
with multiplicities $0\leq c_{\lambda,n}\leq\infty$. For each partition $\lambda$, $c_{\lambda,n}$ is independent of $n$.
\end{itemize}

In order to study the consistent sequence of representations of the symmetric groups $\mathfrak{S}_n$ over a commutative ring $R$, Church, Ellenberg and Farb introduced and developed the theory of $FI$-modules in their paper\cite{3}, where $FI$ is the category of finite sets and injective maps. An FI-module over a commutative ring $R$ is a functor from the category $FI$ to the category of $R$-modules. Thus a sequence of representations of symmetric groups is encoded by a single object $FI$-module. An important result is that a finitely generated $FI$-module over a field of characteristic 0 can give rise a consistent sequence which turns out to be representation stable\cite{3}.


In \cite{6}, Gan and Watterlond researched $VI$-modules, where $VI$ is the category of finite dimensional vector spaces and injective linear maps. They proved a result about the representation stability for the family of finite general linear groups $GL_n(\mathbb{F}_q)$. That is, a $VI$-module over an algebraically closed field of characteristic zero is finitely generated if and only if the consistent sequence obtained from $V$ is representation stable and $\dim(V_n)<\infty$ for each $n$.

In this paper, we imitate the idea of Church, Ellenberg and Farb \cite{3} to introduce a similar idea for the family of the Iwahori-Hecke algebras. We prove that a finitely generated consistent sequence associated with Hecke algebras is representation stable.
\section{preliminaries}

Let's first list some basic results about Hecke algebras and some notations.

Let $k$ be a field of characteristic 0 and $q\in k$ which is not a root of unity. The Iwahori-Hecke algebra(or simply the Hecke algebra) $\mathscr{H}_n=\mathscr{H}_{k,q}(\mathfrak{S}_n)$ of the symmetric group $\mathfrak{S}_n$ is the unital associative $k$-algebra with generators $\{T_{s_1}, T_{s_2}, \ldots, T_{s_{n-1}}\}$ and relations

$(T_{s_i}-q)(T_{s_i}+1)=0$, for $i=1,2,\ldots,n-1$;

$T_{s_i}T_{s_j}=T_{s_j}T_{s_i}$, for $1\leq i<j-1\leq n-2$;

$T_{s_i}T_{s_{i+1}}T_{s_i}=T_{s_{i+1}}T_{s_i}T_{s_{i+1}}$, for $i=1,2,\ldots,n-2$.

We shall denote $\mathscr{H}_{0}=\mathscr{H}_{1}=k.$

Suppose $\sigma\in\mathfrak{S}_n$ and let $\sigma=s_{i_1}\cdots s_{i_l}$ be a \emph{reduced expression}, which means
$$l=\min\{m|\sigma=s_{i_1}\cdots s_{i_m}\}$$
and the number $l$ is denoted by $l(\sigma)$. Denote $T_\sigma:=T_{s_{i_1}}\cdots T_{s_{i_l}}$ which is an element of the Hecke algebra $\mathscr{H}_n$. Then $\mathscr{H}_n$ is free with $k$-basis $\{T_\sigma|\sigma\in\mathfrak{S}_n\}$. If $\sigma_1\in\mathfrak{S}_n, \sigma_2\in\mathfrak{S}_n$ and $l(\sigma_1\sigma_2)=l(\sigma_1)+l(\sigma_2)$, then $T_{\sigma_1\sigma_2}=T_{\sigma_1}T_{\sigma_2}$.

Let $n\in\mathbb{N}$. A sequence of non-negative integers $\lambda=(\lambda_1, \lambda_2, \cdots, \lambda_m)$ which satisfies $\lambda_1+\lambda_2+\cdots+ \lambda_m=n$ is called a composition of $n$. A composition $\lambda$ is called a \emph{partition} of $n$ when $\lambda_1\geq\lambda_2\geq\ldots\geq\lambda_m\geq0$. Composition(partition) is generally denoted by $\lambda\models n$($\lambda\vdash n$). For each composition $\lambda$ of $n$, the standard Young subgroup $\mathfrak{S}_{\lambda}$ of $\mathfrak{S}_n$ is defined by $S_{\{1,2, \ldots, \lambda_1\}}\times S_{\{\lambda_1+1,\lambda_1+2, \ldots, \lambda_1+\lambda_2\}}\times\cdots\times S_{\{\lambda_1+\lambda_2+\cdots+\lambda_{n-1}+1, \ldots, n\}}$. The subalgebra $\mathscr{H}_{\lambda}$ of $\mathscr{H}_n$ generated by $\{T_{\sigma}|\sigma\in \mathfrak{S}_{\lambda}\}$ is $k$-free with basis $\{T_{\sigma}|\sigma\in \mathfrak{S}_{\lambda}\}$. This subalgebra is called Young subalgebra.

Let $\lambda=(\lambda_1, \lambda_2,\ldots)$ be a partition. For any $n\geq|\lambda|+\lambda_1$, the \emph{padded partition} is defined as:
$$\lambda[n]:=(n-|\lambda|, \lambda_1, \lambda_2,\ldots),$$
where $|\lambda|=\sum\limits_{i=1}^\infty\lambda_i$.

In the case of the Hecke algebra $\mathscr{H}_n$ that we have assumed, the simple modules of Hecke algebra can be labelled (up to isomorphism) by the set of all partitions of $n$. For a partition $\lambda$ and $n\geq|\lambda|+\lambda_1$, we denote $S^\lambda$ the corresponding simple $\mathscr{H}_{|\lambda|}$-module and set $S(\lambda)_n:=S^{\lambda[n]}$.

\begin{definition}
A sequence $V=(V_n,\phi_n)_{n\in \mathbb{N}}$ is called
\emph{consistent sequence} or \emph{$FI_{\mathscr{H}}$-module} if it satisfies: for all $h\in \mathscr{H}_{n}$, the diagram
 $$\CD
   V_n @>\phi_n>> V_{n+1} \\
   @V h VV @V \tau(h) VV  \\
   V_n @>\phi_n>> V_{n+1}
 \endCD
 $$
 is commutative, where $\tau: \mathscr{H}_{n}\to \mathscr{H}_{n+1}$ is a $k$-algebra map defined by $\tau(T_{s_i})=T_{s_i}$ for $1\leq i\leq n-1$ and $V_n$ is an $\mathscr{H}_{n}$-module and $\phi_n: V_n\to V_{n+1}$ is a $k$-linear map, .
\end{definition}
Let $V=(V_n, \phi_n)$ and $W=(W_n, \psi_n)$ be two $FI_\mathscr{H}$-modules. A \emph{morphism} $f=(f_n)$ from $V$ to $W$ is a sequence of homomorphisms $f_n: V_n\to W_n$ for all $n\geq 0$ making the following diagram:
$$\CD
  V_n @>\phi_n>> V_{n+1} \\
  @V f_n VV @V f_{n+1} VV  \\
  W_n @>\psi_n>> W_{n+1}
\endCD
$$commutes.

\begin{remark}
$FI_\mathscr{H}$-modules together with the above morphisms is an abelian category. Notions such as kernel, cokernel, injection, and surjection are defined by pointwise. The direct sum and tensor product also make sense. For example, let $f=(f_n): V\to W$ be a morphism. $(\ker f)_n$ is defined by $\ker(f_n)$ and the map from $(\ker f)_n$ to $(\ker f)_{n+1}$ is the restriction of $\phi_n$. The tensor product $V\bigotimes W$ of two $FI_\mathscr{H}$-modules $V=(V_n,\phi_n)$ and $W=(W_n,\psi_n)$ is defined by $(V\bigotimes W)_n=V_n\bigotimes_kW_n$ and the map from $(V\bigotimes W)_n$ to $(V\bigotimes W)_{n+1}$ is defined by $\phi_n\otimes\psi_n$.
\end{remark}

\begin{definition}
Let $W=(W_n)_{n\geq0}$, where $W_n$ is an $\mathscr{H}_{n}$-module. Denote $\lambda_{m,n}$ the composition $(m,n-m)$ for $m\leq n$. Then $W_m$ will be an $\mathscr{H}_{\lambda_{m,n}}$-module under the action $T_{\sigma}w=T_{\sigma_1\sigma_2}w=T_{\sigma_1}w$, where $\sigma=\sigma_1\sigma_2$ for $\sigma_1\in S_{\{1,2,\ldots,m\}}$ and $\sigma_2\in S_{\{m+1,\ldots,n-m\}}$ and $w\in W_m$. We define a sequence $M(W)$:
For $n\geq0$,
$$M(W)_n:=\bigoplus_{m\leq n}(\mathscr{H}_n\bigotimes_{\mathscr{H}_{\lambda_{m,n}}}W_m),$$
with the map $\phi_n: M(W)_n\to W(W)_{n+1}$ determined by $h\otimes w_m\mapsto \tau(h)\otimes w_m$, where $h\in\mathscr{H}_n$ and $w_m\in W_m$.

\end{definition}

It is easy to check that $M(W)$ is an $FI_\mathscr{H}$-module. If $W=(W_n)_{n\geq0}$ satisfies $W_m=\mathscr{H}_m$ for some $m$ and $W_n=0$ for $n\neq m$, then we denote $M(W)$ by $M(m)$.

Let $V=(V_n,\phi_n)$ be an $FI_\mathscr{H}$-module. Denote $\phi_{n-1,m}=\phi_{n-1}\phi_{n-2}\cdots\phi_m: V_m\to V_n$ for $0\leq m\leq n-2$ and $\phi_{n-1,n-1}=\phi_{n-1}$. For a disjoint union $S=\bigsqcup\limits_{n=0}^{\infty}S_n$, where $S_n\subseteq V_n$. We define $span(S)_n$ to be the submodule of $V_n$ generated by $\big(\bigcup\limits_{i=0}^{n-1}\phi_{n-1,i}(S_i)\big)\cup S_n$. And the map $\widetilde{\phi}_n: span(S)_n\to span(S)_{n+1}$ is the restriction of $\phi_n$. This makes $span(S)$ to be an $FI_\mathscr{H}$-module.
\begin{definition}
We say that an $FI_\mathscr{H}$-module $V$ is generated in degree $\leq n$ if $V$ is generated by elements of $V_m$ for $m\leq n$. That is, $span(\bigsqcup_{m\leq n}V_m)=V$.
\end{definition}

\begin{definition}
We say that an $FI_\mathscr{H}$-module $V$ is finitely generated if there is a finite set $S$ of the disjoint union $\bigsqcup V_n$ such that $span(S)=V$.
\end{definition}

\begin{definition}
\textbf{(Representation stability of consistent sequence)}
Let $(V_n,\phi_n)_{n\in \mathbb{N}}$ be an $FI_\mathscr{H}$-module. This sequence is called \emph{uniformly representation stable} if there exists an integer $N$ such that for each with $n\geq N$, the following conditions hold:
\begin{itemize}
  \item \textbf{Injectivity}: $\phi_n: V_n\to V_{n+1}$ is injective.
  \item \textbf{Surjectivity}: $V_{n+1}$ is generated by the image of $\phi_n$.
  \item \textbf{Multiplicities}: $V_n=\bigoplus_{\lambda} c_{\lambda,n}S(\lambda)_n$, where the multiplicities $c_{\lambda,n}$ is independent of $n$.
\end{itemize}
\end{definition}
This definition of representation stability is in the sense of Church and Farb\cite{4}. They introduced the idea of representation stability for a sequence of representations of groups. An important result in \cite{3} is that a consistent sequence of $\mathfrak{S}_n$-representations is representation stable can be converted to a finite generation property for an $FI$-module.


\section{Stability degree}
In this section, we define the notions of stability degree, injective degree and surjective degree for an $FI_\mathscr{H}$-module. For $W=(W_n)_{n\geq0}$, we will compute these numbers for the $FI_\mathscr{H}$-module $M(W)$. The following lemma shows that $M(W)$ is similar to the "free object".
\begin{lemma}\label{Free}
If an $FI_\mathscr{H}$-module $V=(V_n,\phi_n)$ is generated in degree $\leq d$, then there exists an epimorphism from $\bigoplus\limits_{i=0}^dM(V_i)$ to $V$.
\end{lemma}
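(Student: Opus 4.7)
The plan is to construct, for each $i \in \{0,1,\ldots,d\}$, a morphism $\eta_i : M(V_i) \to V$ of $FI_\mathscr{H}$-modules, and then to take the direct sum $\eta = \bigoplus_{i=0}^d \eta_i : \bigoplus_{i=0}^d M(V_i) \to V$. For $n < i$ I set $\eta_{i,n} = 0$ (as $M(V_i)_n = 0$), and for $n \geq i$ I define
$$\eta_{i,n} : \mathscr{H}_n \otimes_{\mathscr{H}_{\lambda_{i,n}}} V_i \to V_n, \qquad h \otimes v \mapsto h \cdot \phi_{n-1,i}(v),$$
with the convention $\phi_{i-1,i} := \mathrm{id}_{V_i}$ handling the boundary case $n = i$.

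The main technical step is to verify that $\eta_{i,n}$ descends to the balanced tensor product. This requires that the map $v \mapsto \phi_{n-1,i}(v)$ from $V_i$ to $V_n$ be $\mathscr{H}_{\lambda_{i,n}}$-equivariant, where $V_i$ carries the $\mathscr{H}_{\lambda_{i,n}}$-action prescribed in the definition of $M(W)$. Equivariance under the first factor $\mathscr{H}_i \subset \mathscr{H}_{\lambda_{i,n}}$ is immediate by iterating the consistency axiom, since $\tau$ fixes each generator $T_{s_j}$ with $j < i$. Equivariance under the second factor reduces to showing that, for $i+1 \leq j \leq n-1$, the element $T_{s_j}$ acts on $\phi_{n-1,i}(v) \in V_n$ in exactly the same way that it acts on $v$ under the prescribed $\mathscr{H}_{\lambda_{i,n}}$-module structure on $V_i$; this I would handle by induction on $n - i$, pushing each $T_{s_j}$ (with $j \leq n-2$) through the outermost $\phi_{n-1}$ via consistency and then invoking the inductive hypothesis.

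Once $\eta_{i,n}$ is well defined, its compatibility with the structure maps is a direct check: from the definition of the structure map on $M(V_i)$ and the consistency of $V$,
$$\eta_{i,n+1}(\tau(h) \otimes v) = \tau(h) \cdot \phi_{n,i}(v) = \tau(h) \cdot \phi_n\bigl(\phi_{n-1,i}(v)\bigr) = \phi_n\bigl(h \cdot \phi_{n-1,i}(v)\bigr) = \phi_n\bigl(\eta_{i,n}(h \otimes v)\bigr),$$
so $\eta_i$ is a morphism of $FI_\mathscr{H}$-modules.

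Finally, for surjectivity of $\eta$ at each degree $n$, observe that the image of $\eta_{i,n}$ is the $\mathscr{H}_n$-submodule of $V_n$ generated by $\phi_{n-1,i}(V_i)$, since $1 \otimes v \mapsto \phi_{n-1,i}(v)$ and the assignment is an $\mathscr{H}_n$-module map. Summing over $0 \leq i \leq \min(d,n)$, the image of $\eta_n$ is the $\mathscr{H}_n$-submodule of $V_n$ generated by $\bigcup_{i \leq d} \phi_{n-1,i}(V_i)$, which equals all of $V_n$ by the assumption that $V$ is generated in degree $\leq d$. The one genuine obstacle throughout the argument is the well-definedness step in the second paragraph; once that equivariance is in hand, the remainder is essentially formal.
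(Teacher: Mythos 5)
Your map is the same one the paper writes down ($f_n(1\otimes v_i)=\phi_{n-1}\cdots\phi_i(v_i)$, extended $\mathscr{H}_n$-linearly), and you are right that everything hinges on whether this descends to the balanced tensor product $\mathscr{H}_n\otimes_{\mathscr{H}_{\lambda_{i,n}}}V_i$ --- a point the paper's proof does not address at all. But your proposed verification of second-factor equivariance does not close. What you need is $T_{s_j}\,\phi_{n-1,i}(v)=\phi_{n-1,i}(v)$ for all $i+1\le j\le n-1$, because in the prescribed $\mathscr{H}_{\lambda_{i,n}}$-structure on $V_i$ the generators of the second factor act as the identity. Your induction pushes $T_{s_j}$ through $\phi_{n-1}$ only when $j\le n-2$; the top generator $T_{s_{n-1}}$ is not in the image of $\tau:\mathscr{H}_{n-1}\to\mathscr{H}_n$, so the consistency square places no constraint whatsoever on $T_{s_{n-1}}\phi_{n-1}(w)$. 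This unhandled case reappears at every stage of the induction (already at the bottom, $T_{s_i}\phi_i(v)$ is completely unconstrained), so the required equivariance is never established.

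Moreover it cannot be established: it is an extra hypothesis on $V$, not a consequence of consistency. Take $V_n=\mathscr{H}_n\otimes_{\mathscr{H}_i}V_i$ for $n\ge i$ (induction from $\mathscr{H}_i$ alone, not from $\mathscr{H}_{\lambda_{i,n}}$), $V_n=0$ for $n<i$, with $\phi_n(h\otimes v)=\tau(h)\otimes v$. This is a consistent sequence generated in degree $\le i$, yet $T_{s_j}(1\otimes v)\ne 1\otimes v$ for $j>i$, and $\dim V_n=(n!/i!)\dim V_i$ strictly exceeds $\dim M(V_i)_n=\binom{n}{i}\dim V_i$ once $n\ge i+2$, so no epimorphism of the asserted shape can exist. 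In short: you correctly located the crux that the paper silently skips, but your repair does not fix it, and the gap is real --- closing it requires either strengthening the definition of consistent sequence so that the second factor of $\mathscr{H}_{\lambda_{i,n}}$ fixes the image of $\phi_{n-1,i}$, or replacing $M(V_i)_n$ by the full induction $\mathscr{H}_n\otimes_{\mathscr{H}_i}V_i$.
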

\begin{proof}
For every $0\leq n\leq d$, $V_n$ is a summand of $(\bigoplus\limits_{i=0}^dM(V_i))_n$ since $M(V_n)_n=V_n$ by definition. So the functions from $\big(\bigoplus\limits_{i=0}^dM(V_i)\big)_n$ to $V_n$ are defined by projections. Now let $n>d$, then $\big(\bigoplus\limits_{i=0}^dM(V_i)\big)_n=\bigoplus\limits_{i=0}^d\big(\mathscr{H}_n\bigotimes_{\mathscr{H}_{\lambda_i}}V_i\big)$. If $v_i\in V_i$ for $0\leq i\leq d$, then $f_n(1\otimes v_i)=\phi_{n-1}\cdots\phi_{i+1}\phi_{i}(v_i)$. The fact $\phi_n f_n=f_{n+1}$ implies that $f=(f_n)_{n\geq0}$ is a morphism. From the construction of $f$ and $V$ is generated in degree $\leq d$, we know $f$ is an epimorphism.
\end{proof}

Given $a\geq0$ and an $FI_\mathscr{H}$-module $V=(V_n,\phi_n)$, we define a sequence $\Phi_a(V)$ as follows. Denote by $Q_n$ the subspace of $V_{a+n}$ spanned by $\{T_{\sigma}v-v|\sigma\in S_{\{a+1,a+2, \ldots , a+n\}}v\in V_{a+n}\}$ for $n\geq 0$ and define $\Phi_a(V)_n:=V_{a+n}/Q_n$. Define the map $T: \Phi_a(V)_n\to\Phi_a(V)_{n+1}$ by $[v]\mapsto[\phi_{a+n}(v)]$. The map $T$ is well-defined since for $T_{\sigma}v-v\in Q_n$, $\phi_{a+n}(T_{\sigma}v-v)=T_{\sigma}\phi_{a+n}(v)-\phi_{a+n}(v)\in Q_{n+1}$.
\begin{definition}
If there exists a natural number $s$ such that for all $a\geq0$, the map defined above is an isomorphism for $n\geq s$, then $V$ is called to have \emph{stability degree} $s$. If the map $T$ is injective or surjective, then $V$ is called to have injective degree $s$ or surjective degree $s$. These facts are denoted by stab-deg$(V)$\big(or inj-deg$(V)$, sur-deg$(V)$\big)$=s$.

\end{definition}

The following lemma is about double cosets of symmetric group of two Young subgroups.
\begin{lemma}\label{Double coset}
Given $a\geq0$. For $n\geq0$ and $m\leq a+n$, let $\lambda_n=(m,a+n-m)$ and $\mu_{n}=(1,1,\ldots,1,n)$ be two compositions of $a+n$. Denote $\mathscr{D}_{\mu_n,\lambda_n}$ the representative elements of minimal length of the double cosets $\mathfrak{S}_{\mu_n}\backslash\mathfrak{S}_{a+n}/\mathfrak{S}_{\lambda_n}$. Then $\mathscr{D}_{\mu_n,\lambda_n}\subseteq \mathscr{D}_{\mu_{n+1},\lambda_{n+1}}$ and if $n\geq m$, the set $\mathscr{D}_{\mu_n,\lambda_n}$ is independent of $n$.

\end{lemma}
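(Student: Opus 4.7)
The plan is to invoke the standard characterization of minimal length double coset representatives in symmetric groups: a permutation $\sigma \in \mathfrak{S}_{a+n}$ lies in $\mathscr{D}_{\mu,\lambda}$ if and only if $\sigma$ is strictly increasing on each $\lambda$-block of positions and $\sigma^{-1}$ is strictly increasing on each $\mu$-block of values. I will apply this criterion at both levels $n$ and $n+1$ and compare.

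For the inclusion $\mathscr{D}_{\mu_n,\lambda_n}\subseteq\mathscr{D}_{\mu_{n+1},\lambda_{n+1}}$, take $\sigma\in\mathscr{D}_{\mu_n,\lambda_n}$ and view it inside $\mathfrak{S}_{a+n+1}$ via the standard inclusion, so that $\sigma(a+n+1)=a+n+1$. The $\lambda_{n+1}$- and $\mu_{n+1}$-blocks differ from those at level $n$ only by inserting the element $a+n+1$ into the last $\lambda$-block $\{m+1,\ldots,a+n+1\}$ and the last $\mu$-block $\{a+1,\ldots,a+n+1\}$. Since $\sigma(a+n+1)=\sigma^{-1}(a+n+1)=a+n+1$ is maximal in $\{1,\ldots,a+n+1\}$, the only new monotonicity conditions to verify are $\sigma(a+n)<a+n+1$ and $\sigma^{-1}(a+n)<a+n+1$, and both hold automatically since $\sigma(a+n),\sigma^{-1}(a+n)\leq a+n$.

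For the stability claim when $n\geq m$, it suffices to establish the reverse inclusion, and this reduces to showing every $\sigma\in\mathscr{D}_{\mu_{n+1},\lambda_{n+1}}$ fixes $a+n+1$. Since $\sigma^{-1}$ is strictly increasing on $\{a+1,\ldots,a+n+1\}$, the $n+1$ distinct positions $\sigma^{-1}(a+1)<\sigma^{-1}(a+2)<\cdots<\sigma^{-1}(a+n+1)$ satisfy $\sigma^{-1}(a+n+1)=\max_i\sigma^{-1}(a+i)$. If we had $\sigma^{-1}(a+n+1)\leq m$, then all of these positions would lie in $\{1,\ldots,m\}$, forcing $n+1\leq m$ and contradicting $n\geq m$. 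Hence $\sigma^{-1}(a+n+1)\geq m+1$, so $a+n+1$ lies in $\sigma(\{m+1,\ldots,a+n+1\})$; because $\sigma$ is strictly increasing on this block and $a+n+1$ is the global maximum, this forces $\sigma(a+n+1)=a+n+1$. Restricting $\sigma$ to $\{1,\ldots,a+n\}$ then yields an element of $\mathscr{D}_{\mu_n,\lambda_n}$, giving the desired equality.

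The main obstacle is really only bookkeeping: one must track how the $\mu$- and $\lambda$-block structures evolve with $n$ and pinpoint exactly where the hypothesis $n\geq m$ is needed, namely as the pigeonhole inequality $n+1>m$ used above. No input beyond the standard combinatorics of double cosets of Young subgroups is required.
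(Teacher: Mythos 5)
Your proof is correct, and it takes a noticeably more direct route than the paper's. You work entirely with the standard monotonicity characterization of distinguished double coset representatives ($\sigma$ increasing on $\lambda$-blocks, $\sigma^{-1}$ increasing on $\mu$-blocks), verify the inclusion $\mathscr{D}_{\mu_n,\lambda_n}\subseteq\mathscr{D}_{\mu_{n+1},\lambda_{n+1}}$ by checking that the only new monotonicity constraints involve the fixed point $a+n+1$ and hold automatically, and then prove the \emph{reverse} inclusion for $n\geq m$ by a pigeonhole argument showing every element of $\mathscr{D}_{\mu_{n+1},\lambda_{n+1}}$ must fix $a+n+1$ and hence restricts to an element of $\mathscr{D}_{\mu_n,\lambda_n}$. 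The paper instead invokes the Dipper--James bijection between $\mathscr{D}_{\mu_n,\lambda_n}$ and row-standard $\lambda_n$-tableaux of type $\mu_n$, observes that these tableaux are governed by which elements of $\{1,\ldots,a\}$ sit in the first row, and deduces stability from the fact that the \emph{count} $\mathscr{T}_n$ stabilizes for $n\geq m$; equality of the sets then follows from the inclusion together with equal finite cardinalities. The two arguments rest on the same underlying combinatorics, but yours has the advantage of exhibiting the reverse inclusion explicitly (and pinpointing exactly where $n\geq m$ enters, as $n+1>m$), whereas the paper's counting argument is shorter once the tableau bijection is taken as given but leaves the preservation-of-minimality step and the stabilization of $\mathscr{T}_n$ as assertions. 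No gaps in your argument.
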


\begin{proof}
By Lemma 1.7 in\cite{1}, there is a bijection from the set of row-standard tableaux in $\mathscr{T}(\lambda_n,\mu_n)$ to $\mathscr{D}_{\mu_n,\lambda_n}$. These row-stantard $\lambda_n$-tableaux of type $\mu_n$ are determined by the subset of $\{1,2,\ldots,a\}$ which is in the first row of $\lambda_n$. Denote the number of the set of row-standard tableaux in $\mathscr{T}(\lambda_n,\mu_n)$ by $\mathscr{T}_n$. From the constructions of $\lambda_n$ and $\mu_n$, we have $\mathscr{T}_n\leq\mathscr{T}_{n+1}$. If $\sigma\in\mathscr{D}_{\mu_n,\lambda_n}$ is the element of minimal length in a double coset $\mathfrak{S}_{\mu_n}\sigma\mathfrak{S}_{\lambda_n}$, then $\sigma$'s length is also minimal in the double coset $\mathfrak{S}_{\mu_{n+1}}\sigma\mathfrak{S}_{\lambda_{n+1}}$. We obtain that $\sigma\in\mathscr{D}_{\mu_n,\lambda_n}$ implies $\sigma\in\mathscr{D}_{\mu_{n+1},\lambda_{n+1}}$. If $n\geq m$, the number $\mathscr{T}_n$ is independent of $n$. So we complete the proof of the lemma.

\end{proof}

\begin{lemma}Assume $W=(W_n)_{n\geq0}$, where $W_n$ is an $\mathscr{H}_n$-module for $n\geq0$. Then $M(W)$ has injective degree 0; if $W_i=0$ for all $i>m$, then we have surj-deg$\big(M(W)\big)\leq m$.
$M(m)$ has injective degree $0$ and surjective degree $m$.
\end{lemma}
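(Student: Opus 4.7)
The plan is to use a Hecke-algebra analogue of the Mackey (double-coset) decomposition. For each $k \le a+n$, one has an $(\mathscr{H}_{\mu_n},\mathscr{H}_{\lambda_{k,a+n}})$-bimodule decomposition
\[
\mathscr{H}_{a+n} \;=\; \bigoplus_{d \in \mathscr{D}_{\mu_n,\lambda_{k,a+n}}} \mathscr{H}_{\mu_n} T_d \mathscr{H}_{\lambda_{k,a+n}},
\]
which follows from the Bruhat-style factorization $T_\pi = T_\alpha T_d T_\beta$ (with lengths adding) available for Hecke algebras. Tensoring over $\mathscr{H}_{\lambda_{k,a+n}}$ with $W_k$ and passing to the quotient defining $\Phi_a$, I expect every basis element $T_\pi\otimes w$ in the $k$-th summand to be absorbed (modulo $Q_n$) into one of the form $T_d\otimes w$ with $d\in\mathscr{D}_{\mu_n,\lambda_{k,a+n}}$, and the surviving relations on the $W_k$-factor to correspond to coinvariance under $H_d := d^{-1}\mathfrak{S}_{\mu_n}d\cap\mathfrak{S}_{\lambda_{k,a+n}}$. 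This yields
\[
\Phi_a(M(W))_n \;\cong\; \bigoplus_{k\le a+n}\;\bigoplus_{d\in\mathscr{D}_{\mu_n,\lambda_{k,a+n}}} (W_k)_{H_d}.
\]

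For the claim inj-deg$(M(W))=0$, the map $T$ identifies the $d$-summand at level $n$ with the same-$d$ summand at level $n+1$: by Lemma~\ref{Double coset} one has $\mathscr{D}_{\mu_n,\lambda_{k,a+n}}\subseteq\mathscr{D}_{\mu_{n+1},\lambda_{k,a+n+1}}$, while a short computation shows that $H_d$ changes only by adjoining the new index $a+n+1$ to its second factor (sitting inside $S_{\{k+1,\ldots,a+n+1\}}$), which already acts trivially on $W_k$. Hence $(W_k)_{H_d}$ is unchanged and $T$ is injective for every $a,n\ge 0$.

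For the surjectivity claim when $W_i=0$ for all $i>m$, only summands with $k\le m$ contribute. By Lemma~\ref{Double coset}, $\mathscr{D}_{\mu_n,\lambda_{k,a+n}}$ is independent of $n$ once $n\ge m\ge k$, and combined with the stability of $(W_k)_{H_d}$ noted above this makes $T$ a bijection for $n\ge m$, proving surj-deg$(M(W))\le m$. Specializing $W_m=\mathscr{H}_m$ (and $W_i=0$ otherwise) gives surj-deg$(M(m))\le m$; for the matching lower bound I would take $a=0$, where $\Phi_0(M(m))_{m-1}=0$ since $M(m)_{m-1}=0$, while $\Phi_0(M(m))_m$ is a nonzero quotient of $\mathscr{H}_m$ (a direct computation identifies it with the trivial-character quotient), so $T$ cannot surject at $n=m-1$.

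The principal obstacle will be the setup step: carefully constructing the Hecke Mackey decomposition and verifying that the residual relations on $W_k$ after quotienting by $Q_n$ are exactly the coinvariance relations under $H_d$ and nothing more. Once this parametrization is in place, Lemma~\ref{Double coset} together with the observation that the newly adjoined generators of $H_d$ all sit inside the (trivially acting) second factor of $\mathfrak{S}_{\lambda_{k,a+n}}$ reduces the rest of the proof to combinatorial bookkeeping.
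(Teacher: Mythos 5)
Your argument follows the paper's proof essentially verbatim: both rest on the Dipper--James double-coset decomposition $\mathscr{H}_{a+n}=\bigoplus_{\sigma\in\mathscr{D}_{\mu_n,\lambda_n}}\mathscr{H}_{\mu_n}T_\sigma\mathscr{H}_{\lambda_n}$, the induced splitting of $\Phi_a(M(W))_n$ into summands indexed by the minimal double-coset representatives, and Lemma~\ref{Double coset} to see that $T$ carries each summand isomorphically onto the corresponding one (hence injective degree $0$) and that no new summands can appear once $n\geq m$ (hence surjective degree $\leq m$). You are slightly more explicit than the paper in writing the summands as coinvariants $(W_k)_{H_d}$ and in adding a lower-bound check that surj-deg$(M(m))$ is exactly $m$, but the route is the same.
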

\begin{proof}
For $\alpha\in\mathfrak{S}_{a+n}$, there exist $\sigma\in\mathscr{D}_{\mu_n,\lambda_n}$, $\beta\in\mathfrak{S}_{\mu_n}$ and $\gamma\in\mathfrak{S}_{\lambda_n}$ such that $\alpha=\beta\sigma\gamma$. According to the lemma 1.6 in \cite{1}, these elements can be chosen to satisfy $l(\alpha)=l(\beta)+l(\sigma)+l(\gamma)$. So we have $\mathscr{H}_{a+n}=\bigoplus\limits_{\sigma\in\mathscr{D}_{\mu_n,\lambda_n}}(\mathscr{H}_{\mu_n}T_\sigma\mathscr{H}_{\lambda_n})$.

Then by definition, $M(W)_{a+n}$ can be decomposed as
\begin{align}
M(W)_{a+n} & =\bigoplus_{m\leq a+n}(\mathscr{H}_{a+n}\bigotimes_{\mathscr{H}_{\lambda_n}}W_m)\\
 & =\bigoplus_{m\leq a+n}\big(\bigoplus_{\sigma\in\mathscr{D}_{\mu_n,\lambda_n}}(\mathscr{H}_{\mu_n}T_{\sigma}\bigotimes_{\mathscr{H}_{\lambda_n}}W_m)\big).
\end{align}
The map $\phi_{a+n}: M(W)_{a+n}\to M(W)_{a+n+1}$ is the direct sum of the maps: For $m\leq a+n, \sigma\in\mathscr{D}_{\mu_n,\lambda_n}$,
$\mathscr{H}_{\mu_n}T_{\sigma}\bigotimes_{\mathscr{H}_{\lambda_n}}W_m\to\mathscr{H}_{\mu_{n+1}}T_{\sigma}\bigotimes_{\mathscr{H}_{\lambda_{n+1}}}W_m$ defined by $hT_{\sigma}\otimes w\mapsto\tau(h)T_{\sigma}\otimes w$, where $h\in\mathscr{H}_{\mu_n}$ and $w\in W_m$.
And so
 \begin{align}
 \Phi_a\big(M(W)\big)_n & =M(W)_{a+n}/Q_n\\
  & \cong \bigoplus_{m\leq a+n}\Big(\bigoplus_{\sigma\in\mathscr{D}_{\mu_n,\lambda_n}}\big((\mathscr{H}_{\mu_n}T_{\sigma}\bigotimes_{\mathscr{H}_{\lambda_n}}W_m)/Q_n\big)\Big)\\
  & \cong \bigoplus_{m\leq a+n}\big(\bigoplus_{\sigma\in\mathscr{D}_{\mu_n,\lambda_n}}(T_{\sigma}\bigotimes_{\mathscr{H}_{\lambda_n}}W_m)\big).
 \end{align}
The map $T: \Phi_a(M(W))_n\to\Phi_a(M(W))_{n+1}$ restricted in one factor in the above decomposition is the identity. So we have obtained that the inj-deg$(M(W))=0$. If $W_i=0$ for $i>m$ and $n\geq m$, then $\mathscr{H}_{\mu_n}T_{\sigma}\bigotimes_{\mathscr{H}_{\lambda_n}}W_i=0$. So $T$ is surjective for all $n\geq m$. We have proved the proposition.

\end{proof}
From the above lemma we have:
\begin{corollary}\label{corollary}
The surjective degree of a consistent sequence is less than its generated degree.
\end{corollary}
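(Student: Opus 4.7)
The plan is to combine Lemma \ref{Free} with the preceding lemma via a pointwise right-exactness argument. Suppose $V$ is generated in degree $\leq d$. By Lemma \ref{Free} there is an epimorphism $f: A\twoheadrightarrow V$ with $A=\bigoplus_{i=0}^{d}M(V_i)$, where each $M(V_i)$ denotes $M$ applied to the sequence concentrated in degree $i$ with value $V_i$. Because this sequence vanishes above degree $i$, the preceding lemma gives surj-deg$(M(V_i))\leq i\leq d$. Since $\Phi_a$ is defined pointwise as a quotient, it commutes with finite direct sums, so surj-deg$(A)\leq d$ as well.

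The remaining step is to transfer the surjectivity bound across the epimorphism $f$. Pointwise surjectivity of $f$ together with the pointwise-quotient structure of $\Phi_a$ yield that each $\Phi_a(f)_n: \Phi_a(A)_n\to\Phi_a(V)_n$ is surjective. Naturality of the maps $\phi$ in the construction of $T$ produces commutative squares
$$\CD
\Phi_a(A)_n @>T_A>> \Phi_a(A)_{n+1} \\
@V \Phi_a(f)_n VV @V \Phi_a(f)_{n+1} VV \\
\Phi_a(V)_n @>T_V>> \Phi_a(V)_{n+1}
\endCD
$$
For $n\geq d$, the top horizontal arrow and both vertical arrows are surjective, which forces $T_V$ to be surjective. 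Since $a\geq0$ was arbitrary, this establishes surj-deg$(V)\leq d$.

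No serious obstacle is expected here. The only point that requires care is the interpretation of $M(V_i)$ in Lemma \ref{Free} as $M$ of a sequence supported in degree $i$ (so that the hypothesis $W_j=0$ for $j>i$ of the previous lemma is met); everything else is a routine diagram chase expressing the fact that the property ``$T$ is surjective for $n\geq s$'' is inherited by pointwise quotients.
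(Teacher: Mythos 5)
Your proof is correct and follows exactly the route the paper intends: the paper states this corollary with no written proof beyond ``From the above lemma we have,'' and the implicit argument is precisely your combination of Lemma \ref{Free} with the surjective-degree bound for $M(W)$, transferred across the epimorphism via the pointwise-quotient structure of $\Phi_a$. You have simply supplied the diagram chase the paper leaves to the reader.
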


\section{Weight}
Assume that $q$ is not a root of unity, then the Hecke algebra $\mathscr{H}_n$ is semisimple and all the non-isomorphic simple $\mathscr{H}_n$-modules are indexed by all partitions $\lambda\vdash n$. For each partition $\lambda$, the corresponding irreducible module is denoted by $S^{\lambda}$. By the corollary 6.2 in \cite{2}, the branching rule of Hecke algebra is the same as the classical branching rule for $\mathfrak{S}_n$-representations. The results about the branching rule for $\mathfrak{S}_n$-representations which stated in \cite{3} are also true for the modules of Hecke algebra. For $\lambda\vdash n$ and $m\in\mathbb{N}$, write $\lambda\rightarrow\mu$ if $\mu$ is a partition of $n+m$ such that $[\mu]$ is obtained from $[\lambda]$ by adding $m$ boxes from different columns. We write that lemma of the branching rule in the following:

\begin{lemma}(\textbf{The branching rule})\cite{2}
Let $\lambda$ be a partition of $n$. Then
\begin{enumerate}
  \item $\Ind_{\mathscr{H}_n\bigotimes\mathscr{H}_m}^{\mathscr{H}_{n+m}}S^{\lambda}\cong\bigoplus_{\lambda\rightarrow\mu}S^\mu$;
  \item $(\Res^{\mathscr{H}_n}_{\mathscr{H}_{n-m}\bigotimes\mathscr{H}_m}S^{\lambda})/Q_m\cong\bigoplus_{\mu\rightarrow\lambda}S^\mu$.
\end{enumerate}

\end{lemma}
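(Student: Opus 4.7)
The plan is to reduce both parts to the classical branching rule for Hecke algebras in the semisimple regime, cited as Corollary 6.2 of \cite{2}. Since $q$ is not a root of unity, $\mathscr{H}_j$ is split semisimple for every $j$, and the assignment $\lambda\mapsto S^\lambda$ sets up a canonical bijection between simples of $\mathscr{H}_j$ and Specht modules of $k\mathfrak{S}_j$, so the usual combinatorial tools transfer from the group case.

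For part (1), I would view $S^\lambda$ as an $\mathscr{H}_n\otimes\mathscr{H}_m$-module by tensoring with the trivial $\mathscr{H}_m$-module (where each $T_{s_i}$ acts as $q$), and then apply the horizontal Pieri rule for Hecke algebras. The irreducible constituents of the induced module are indexed precisely by the partitions $\mu$ of $n+m$ obtained from $\lambda$ by adjoining a horizontal $m$-strip, that is by $\lambda\to\mu$, and semisimplicity forces every multiplicity to be one. Concretely this is Corollary 6.2 of \cite{2}, and it can alternatively be derived from the classical $\mathfrak{S}_{n+m}$-Pieri rule by transport through the isomorphism of Grothendieck rings available for generic $q$.

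For part (2), I would derive the statement from (1) by Frobenius reciprocity. The subspace $Q_m$ is stable under the $\mathscr{H}_{n-m}$-action, because $\mathscr{H}_{n-m}$ commutes with $\mathscr{H}_m$ inside $\mathscr{H}_n$, so $(\Res S^\lambda)/Q_m$ is a genuine $\mathscr{H}_{n-m}$-module. The key identification is that the quotient by $Q_m$ realises the projection of $\Res S^\lambda$ onto its trivial-isotypic $\mathscr{H}_m$-summand, cut out in the semisimple setting by a central idempotent of $\mathscr{H}_m$. Once this is in hand, for each $\mu\vdash n-m$ I would use
\begin{equation*}
\dim\mathrm{Hom}_{\mathscr{H}_{n-m}}\bigl((\Res S^\lambda)/Q_m,\,S^\mu\bigr)
= \dim\mathrm{Hom}_{\mathscr{H}_n}\bigl(S^\lambda,\,\Ind_{\mathscr{H}_{n-m}\otimes\mathscr{H}_m}^{\mathscr{H}_n}S^\mu\bigr),
\end{equation*}
and by part (1) this multiplicity equals $1$ when $\mu\to\lambda$ and $0$ otherwise; semisimplicity of $\mathscr{H}_{n-m}$ then pins down the asserted decomposition.

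The main obstacle will be the precise identification of $(\Res S^\lambda)/Q_m$ with the trivial-isotypic $\mathscr{H}_m$-component. The naive reading that $T_\sigma$ acts as the identity in the quotient is incompatible with the Hecke relation $(T_{s_i}-q)(T_{s_i}+1)=0$, so one has to view $Q_m$ as the kernel of the projector onto the trivial character $T_{s_i}\mapsto q$. I would establish this via an averaging argument over $\mathfrak{S}_m$ with appropriate $q$-weights, producing an explicit central idempotent of $\mathscr{H}_m$ whose image is exactly the trivial-isotypic summand; this is where the semisimplicity of $\mathscr{H}_m$ is used critically.
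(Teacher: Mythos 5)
The paper does not actually prove this lemma: it is stated as a direct quotation of Corollary 6.2 of \cite{2}, together with the surrounding remark that at generic $q$ the branching behaviour of the simple $\mathscr{H}_n$-modules coincides with the classical one for $\mathfrak{S}_n$. Your sketch supplies the argument the paper omits, and it is the standard one: part (1) is the Pieri rule for the induction product of $S^\lambda$ with the index (trivial) representation of $\mathscr{H}_m$, valid because for $q$ not a root of unity the Grothendieck rings of the $\mathscr{H}_n$ are identified with those of the $k\mathfrak{S}_n$; part (2) then follows from (1) by Frobenius reciprocity (induction and coinduction agree here since $\mathscr{H}_n$ is symmetric and semisimple) once $(\Res S^\lambda)/Q_m$ is identified with the index-isotypic $\mathscr{H}_m$-component. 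That reduction is correct and complete in outline.

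The one substantive point, which you correctly isolate as the main obstacle, is the identification of the quotient by $Q_m$. You are right that it cannot be taken at face value: with $Q_m=\mathrm{span}\{T_\sigma v-v\}$ as literally written in the paper, already for $m=2$ one has $T_{s_1}v-v=(q-1)v$ on the index line and $T_{s_1}v-v=-2v$ on the sign line, so for $q\neq 1$ the span is the whole module and every such quotient vanishes, which would falsify the lemma (and most of Section 4). The reading that makes everything work is $Q_m=\mathrm{span}\{T_\sigma v-q^{l(\sigma)}v\}$, i.e.\ the kernel of the $q$-symmetrizer $e_m=\bigl(\sum_{\sigma\in\mathfrak{S}_m}q^{l(\sigma)}\bigr)^{-1}\sum_{\sigma\in\mathfrak{S}_m}T_\sigma$, which satisfies $T_{s_i}e_m=qe_m$ and is defined precisely because $q$ is not a root of unity; this is exactly the averaging idempotent you propose. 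So your proof is sound, but it proves the lemma for the corrected $Q_m$; you should state that correction explicitly rather than leave it as an interpretive remark, since the lemma as literally interfaced with the paper's Definition of $Q_n$ is false for $q\neq1$.
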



\begin{lemma}\label{branching lemma}\cite{3}
Let $\lambda$ be a partition of $n$ and $a\leq n$.
\begin{enumerate}
  \item $S(\lambda)_n/Q_{n-a}=0 \Longleftrightarrow n-a>n-|\lambda|$.
  \item $S(\lambda)_n/Q_{n-|\lambda|}\cong S^\lambda$.
  \item If $n\geq a+|\lambda|$, then $S(\lambda)_n/Q_{n-a}$ is independent of $n$.
  \item Let $V$ be a consistent sequence with weight less or equal to $d$. If $W_n/Q_{n-d}=0$ for any subquotient $W_n$ of $V_n$, then $W_n=0$.
\end{enumerate}

\end{lemma}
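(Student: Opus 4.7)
The plan is to reduce every part to the combinatorial description of $S(\lambda)_n/Q_{n-a}$ coming from part (2) of the branching rule. Taking $m=n-a$ there identifies
\begin{equation*}
S(\lambda)_n/Q_{n-a}=S^{\lambda[n]}/Q_{n-a}\cong\bigoplus_{\substack{\mu\vdash a\\ \mu\to\lambda[n]}}S^\mu,
\end{equation*}
where $\mu\to\lambda[n]$ (adding $n-a$ boxes in distinct columns) is equivalent to $\lambda[n]/\mu$ being a horizontal strip, i.e.\ to the interlacing inequalities $\lambda[n]_{i+1}\leq\mu_i\leq\lambda[n]_i$ for all $i\geq 1$. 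Since $\lambda[n]=(n-|\lambda|,\lambda_1,\lambda_2,\dots)$, these unpack to
\begin{equation*}
\lambda_1\leq\mu_1\leq n-|\lambda|,\qquad \lambda_i\leq\mu_i\leq\lambda_{i-1}\ (i\geq 2),
\end{equation*}
and the interlacing automatically ensures $\mu$ is a partition.

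Parts (1)--(3) then reduce to inspecting these inequalities. Summing the lower bounds gives $a=|\mu|\geq|\lambda|$, so the index set is empty whenever $a<|\lambda|$, i.e.\ $n-a>n-|\lambda|$; conversely, for any $|\lambda|\leq a\leq n$ one raises the $\mu_i$ from their lower bounds (total sum $|\lambda|$) toward their upper bounds (total sum $n$) to hit the target $a$, producing a valid $\mu$, which gives (1). When $a=|\lambda|$ the lower-bound sum already equals $a$, so every inequality is forced to equality, $\mu=\lambda$ is the unique admissible choice, and this proves (2). For (3), the only $n$-dependent constraint is $\mu_1\leq n-|\lambda|$; this is automatic under $n\geq a+|\lambda|$ because $\mu_1\leq|\mu|=a\leq n-|\lambda|$, so the set of admissible $\mu$'s stabilizes and $S(\lambda)_n/Q_{n-a}$ is independent of $n$.

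For part (4), I would invoke semisimplicity of $\mathscr{H}_n$: by the weight hypothesis any subquotient $W_n$ of $V_n$ decomposes as $\bigoplus_j S^{\lambda_j[n]}$ with $|\lambda_j|\leq d$, and since the coinvariant operation $(-)/Q_{n-d}$ commutes with direct sums, $W_n/Q_{n-d}\cong\bigoplus_j S^{\lambda_j[n]}/Q_{n-d}$; each summand is nonzero by (1) (since $d\geq|\lambda_j|$), so vanishing of the whole quotient forces $W_n=0$. The main technical point is the combinatorial equivalence between ``added boxes in distinct columns'' and the horizontal-strip interlacing in the first paragraph; once that is established, (1)--(3) are routine monotonicity checks on the inequality system and (4) is a formal consequence of (1) together with semisimplicity.
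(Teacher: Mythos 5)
The paper gives no proof of this lemma at all --- it is quoted from Church--Ellenberg--Farb \cite{3} --- so there is no in-paper argument to compare against. Your derivation is correct and is the standard one: parts (1)--(3) follow from part (2) of the branching rule via the horizontal-strip/interlacing description of $\mu\to\lambda[n]$, and part (4) follows from (1) together with semisimplicity of $\mathscr{H}_n$ (available here because $q$ is not a root of unity and $k$ has characteristic $0$). The only step worth making explicit in (4) is that every irreducible constituent $S^\nu$ of $V_n$ is of the form $S(\lambda)_n$ with $\lambda=(\nu_2,\nu_3,\dots)$ and $n\geq|\lambda|+\lambda_1$, so the weight hypothesis genuinely bounds $|\lambda_j|\leq d$ for all constituents of every subquotient; with that observation your argument is complete.
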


\begin{definition}
Let $V$ be an $FI_\mathscr{H}$-module. We say a partition $\lambda$ occurs in $V$, if there exists $n\geq0$, $S(\lambda)_n$ occurs in the $\mathscr{H}_{n}$-module $V_n$. The weight of $V$ is defined the maximum of $|\lambda|$ which $\lambda$ occurs in $V$.
\end{definition}

\begin{lemma}
For any partition $\lambda\vdash m$, the $FI_\mathscr{H}$-module $M(S^{\lambda})$ has weight $m$.
\end{lemma}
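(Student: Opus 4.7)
The plan is to apply the branching rule (Lemma 4.1(1)) to decompose $M(S^\lambda)_n$ into irreducibles explicitly, and then translate the resulting partitions of $n$ into padded form $\mu = \nu[n]$ in order to read off the weight.

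First, I would observe that by the definition of $M(W)$, since $W_m = S^\lambda$ and $W_i = 0$ for $i \neq m$, we have $M(S^\lambda)_n = 0$ for $n<m$ and
\[
M(S^\lambda)_n \;=\; \mathscr{H}_n \otimes_{\mathscr{H}_{\lambda_{m,n}}} S^\lambda \;=\; \Ind_{\mathscr{H}_m \otimes \mathscr{H}_{n-m}}^{\mathscr{H}_n} S^\lambda
\]
for $n \geq m$. Applying Lemma 4.1(1) with parameters $n \mapsto m$ and $m \mapsto n-m$, we obtain
\[
M(S^\lambda)_n \;\cong\; \bigoplus_{\lambda \to \mu} S^{\mu},
\]
where the sum runs over partitions $\mu$ of $n$ such that $[\mu]$ is obtained from $[\lambda]$ by adding $n-m$ boxes lying in pairwise distinct columns; equivalently, $\mu/\lambda$ is a horizontal strip, characterized by the interlacing
$\mu_1 \geq \lambda_1 \geq \mu_2 \geq \lambda_2 \geq \mu_3 \geq \cdots$.

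Next I would translate this decomposition into the padded notation. If $S(\nu)_n = S^{\nu[n]}$ occurs in $M(S^\lambda)_n$, then $\mu = \nu[n] = (n-|\nu|, \nu_1, \nu_2, \ldots)$, so that $\nu_i = \mu_{i+1}$ and $|\nu| = n - \mu_1 = \sum_{i \geq 2} \mu_i$. The horizontal strip inequalities $\mu_{i+1} \leq \lambda_i$ for every $i \geq 1$ sum to
\[
|\nu| \;=\; \sum_{i \geq 2} \mu_i \;\leq\; \sum_{i \geq 1} \lambda_i \;=\; |\lambda| \;=\; m,
\]
showing that every partition $\nu$ occurring in $M(S^\lambda)$ satisfies $|\nu|\leq m$.

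Finally, to see that this bound is attained, I would construct the specific partition $\mu$ with $\mu_1 = n-m$ and $\mu_{i+1} = \lambda_i$ for $i\geq 1$. For $n \geq m + \lambda_1$ this $\mu$ is a genuine partition of $n$ (the condition $\mu_1 \geq \mu_2$ reduces to $n - m \geq \lambda_1$), and $\mu/\lambda$ is the horizontal strip consisting of $n-m$ boxes placed entirely in row $1$. The associated $\nu$ is exactly $\lambda$ itself, so $S(\lambda)_n$ appears in $M(S^\lambda)_n$ for every $n \geq m + \lambda_1$. Combined with the previous step this gives $\mathrm{weight}(M(S^\lambda)) = m$.

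The content of the argument is really just the branching rule plus a short inequality; I expect the only genuine obstacle to be keeping the bookkeeping straight between the true partition $\mu \vdash n$ and the "stable index" $\nu$ with $\mu = \nu[n]$, and verifying that the horizontal-strip interlacing translates cleanly under this reindexing.
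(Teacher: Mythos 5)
Your proposal is correct and follows essentially the same route as the paper: identify $M(S^\lambda)_n$ with the induced module, use the branching rule to see that every constituent $\mu$ is obtained from $\lambda$ by adding $n-m$ boxes in distinct columns (forcing $\mu_1\geq n-m$, hence $|\nu|\leq m$ for $\mu=\nu[n]$), and exhibit $\lambda[n]$ itself as a constituent for $n\geq m+\lambda_1$ to show the bound is attained. Your derivation of the upper bound via the interlacing inequalities is just a slightly more explicit version of the paper's one-line observation, so there is nothing substantively different.
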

\begin{proof}
By the definition of $M(S^\lambda)$, $M(S^\lambda)_n=\mathscr{H}_n\bigotimes\limits_{\mathscr{H}_{m,n-m}} S^\lambda=\Ind S^\lambda$. So for any $n\geq m+\lambda_1$, adding $n-m$ boxes in different columns we know the partition $\lambda[n]$ occurs in $M(S^\lambda)$. So the weight of $M(S^\lambda)$ is at least $m$. On the other hand, for any partition $\mu$ that occurs in $M(S^\lambda)_n$ satisfies $\mu_1\geq n-m$. If we write $\mu=\nu[n]$ for some partition $\nu$, then we have $|\nu|=n-\mu_1\leq m$. Then the weight of $M(S^\lambda)$ is at most $m$. We thus complete the proof of this lemma.
\end{proof}
If $V$ is generated in degree $\leq d$, then the weight of $V$ is less or equal to $d$ by the above lemma.

\begin{lemma}
For any partition $\lambda=(\lambda_1,\lambda_2,\cdots)$, the consistent sequence $M(S^{\lambda})$ has stability degree $\lambda_1$.
\end{lemma}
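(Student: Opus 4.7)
The plan is to combine the injective-degree bound from the previous lemma with a multiplicity count based on the branching rule, reducing the statement to an elementary bijection between sets of partitions. Since the previous lemma already gives $\text{inj-deg}(M(S^\lambda))=0$, the map $T:\Phi_a(M(S^\lambda))_n\to\Phi_a(M(S^\lambda))_{n+1}$ is injective for every $a\geq 0$ and every $n\geq 0$; hence it suffices to establish an equality of dimensions for $n\geq\lambda_1$.

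To compute these dimensions I would first apply the Pieri half of the branching rule (Lemma 4.1(1)) to write
$$M(S^\lambda)_{a+n}\cong\bigoplus_{\nu}S^\nu,\qquad \nu\vdash a+n,\ \nu\supseteq\lambda,\ \nu/\lambda\ \text{a horizontal strip},$$
and then apply Lemma 4.1(2) to each $S^\nu/Q_n$ to obtain
$$\Phi_a(M(S^\lambda))_n\cong\bigoplus_{\mu\vdash a}c_{\mu,n}\,S^\mu,$$
where $c_{\mu,n}$ is the cardinality of the set $\mathcal{N}_{\mu,n}$ of partitions $\nu\vdash a+n$ for which both $\nu/\lambda$ and $\nu/\mu$ are horizontal strips. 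The problem then reduces to exhibiting a bijection $\mathcal{N}_{\mu,n}\to\mathcal{N}_{\mu,n+1}$ for every $\mu\vdash a$ and every $n\geq\lambda_1$.

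The natural candidate is $F:\nu\mapsto\nu+(1,0,0,\ldots)$, which simply adjoins a box at the end of the first row. In the forward direction $F$ clearly preserves both horizontal-strip conditions, since these impose bounds only on $\nu_{i+1}$ with $i\geq 1$, which are unaffected. The inverse $\nu'\mapsto\nu'-(1,0,0,\ldots)$ is defined precisely when $\nu'_1>\nu'_2$, so the main obstacle is to rule out partitions $\nu'\in\mathcal{N}_{\mu,n+1}$ with $\nu'_1=\nu'_2$ under the hypothesis $n\geq\lambda_1$.

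To dispose of this obstacle, suppose $\nu'\in\mathcal{N}_{\mu,n+1}$ satisfies $\nu'_1=\nu'_2$. The inclusion $\nu'\supseteq\lambda$ together with the horizontal-strip inequality $\nu'_2\leq\lambda_1$ forces $\nu'_1=\nu'_2=\lambda_1$; the same argument applied to $\mu$ gives $\nu'_1=\mu_1$. From $\nu'/\mu$ being a horizontal strip one reads off $\nu'_{i+1}\leq\mu_i$ for every $i\geq 1$, whence
$$\sum_{i\geq 2}\nu'_i\leq\sum_{i\geq 1}\mu_i=a.$$
On the other hand $\sum_{i\geq 2}\nu'_i=|\nu'|-\nu'_1=a+n+1-\lambda_1$, so $n+1\leq\lambda_1$, contradicting $n\geq\lambda_1$. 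Hence $F$ is a bijection, $c_{\mu,n}=c_{\mu,n+1}$ for every $\mu$, and together with the already-known injectivity of $T$ this yields that $T$ is an isomorphism for all $n\geq\lambda_1$, proving that $M(S^\lambda)$ has stability degree $\lambda_1$.
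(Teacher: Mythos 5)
Your route is genuinely different from the paper's. The paper works inside the algebra: it decomposes $\Phi_a\big(M(S^\lambda)\big)_n$ via the double-coset decomposition $\mathscr{H}_{a+n}=\bigoplus_{\sigma}\mathscr{H}_{\mu_n}T_\sigma\mathscr{H}_{\lambda_n}$, identifies the summand indexed by $\sigma$ with a copy of $S^\lambda/Q_k$ for some $k\leq|\lambda|$ determined by $\sigma^{-1}\mathfrak{S}_{\mu_n}\sigma\cap\mathfrak{S}_{\{1,\dots,|\lambda|\}}$, kills the summands with $k>\lambda_1$ using the branching lemma, and concludes stabilization of the surviving summands. You instead keep only the injectivity statement from the preceding lemma and reduce everything to a dimension count through the two halves of the branching rule, turning the problem into a combinatorial bijection on sets of partitions $\nu$ with $\nu/\lambda$ and $\nu/\mu$ horizontal strips. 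Your version is conceptually cleaner and makes the role of $\lambda_1$ transparent, but it leans entirely on semisimplicity and on the multiplicity-one form of the Pieri rule, whereas the paper's argument exhibits the stable summands explicitly.

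There is, however, a gap in your surjectivity step. For $F^{-1}(\nu')=\nu'-(1,0,\dots)$ to lie in $\mathcal{N}_{\mu,n}$ it is not enough that $\nu'_1>\nu'_2$ (which only guarantees the result is a partition); you also need $\nu'_1-1\geq\lambda_1$ and $\nu'_1-1\geq\mu_1$, i.e., the shrunken partition must still contain both $\lambda$ and $\mu$. The first condition does follow from the inequality you already display, since $\sum_{i\geq2}\nu'_i\leq\sum_{i\geq1}\mu_i=a$ holds for \emph{every} $\nu'\in\mathcal{N}_{\mu,n+1}$ and hence $\nu'_1\geq n+1>\lambda_1\geq\nu'_2$ (this simultaneously disposes of the case $\nu'_1=\nu'_2$, so that case split is not really the issue). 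The second condition does not follow from $\nu'_1\geq n+1$ alone, because $\mu_1$ can be as large as $a$; you need the sharper estimate $\sum_{i\geq2}\nu'_i\leq\min(\lambda_1,\mu_1)+\sum_{i\geq2}\mu_i$, which yields $\nu'_1\geq n+1-\lambda_1+\mu_1>\mu_1$ for $n\geq\lambda_1$. With that line inserted the bijection, and hence your proof of the upper bound, is complete. Note finally that the paper also verifies sharpness --- $T$ fails to be surjective at $n=\lambda_1-1$ because $S^\lambda/Q_{\lambda_1}\neq0$ --- which your write-up omits; whether you owe this depends on whether ``stability degree $\lambda_1$'' is read as the minimal such bound, but you should address it to match the paper's claim.
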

\begin{proof}
From our previous results, we know the stability degree of $M(S_\lambda)$ is less than or equal to $|\lambda|$ and we have:
\begin{equation}\label{decomposition}
  \Phi_aM(S_\lambda)_n\cong \bigoplus_{\sigma\in\mathscr{D}_{\mu_n,\lambda_n}}\big((\mathscr{H}_{\mu_n}T_{\sigma}\bigotimes_{\mathscr{H}_{\lambda_n}}S^\lambda)/Q_n\big),
\end{equation}
where $\mu_n=(1,1,\cdots,1,n)$ and $\lambda_n=(|\lambda|,a+n-|\lambda|)$.

By Lemma\ref{Double coset} and an easily computing, for $\sigma\in\mathscr{D}_{\mu_n,\lambda_n}$ there is some $k\leq |\lambda|$ such that:
$$\sigma^{-1}S_{\mu_n}\sigma\cap S_{\{1,2,\ldots,|\lambda|\}}=S_{\{|\lambda|-k+1,\ldots,|\lambda|\}}.$$
For any $w\in S_{\{|\lambda|-k+1,\cdots,|\lambda|\}}$, there exists $u\in S_{\mu_n}$ such that $w=\sigma^{-1}u\sigma\in\sigma^{-1}S_{\mu_n}\sigma\cap S_{\lambda_n}$. Since $\sigma\in\mathscr{D}_{\mu_n,\lambda_n}$, so $l(\sigma w)=l(\sigma)+l(w)$ and $l(u\sigma)=l(u)+l(\sigma)$. Assume $x$ is an element in $S^\lambda$, then we have:
$$[T_\sigma\otimes T_wx]=[T_\sigma(T_\sigma^{-1}T_uT_\sigma)\otimes x]=[T_uT_\sigma\otimes x]=[T_\sigma\otimes x].$$
By Lemma\ref{branching lemma}, for $k>\lambda_1$, we know $S^\lambda/Q_k=0$. So the summands in the decomposition of (\ref{decomposition}) are stable when $n\geq \lambda_1$. Since $S^\lambda/Q_{\lambda_1}\neq0$, then $T$ is not surjective for $n=\lambda_1-1$. We have completed the proof of the lemma.

\end{proof}
\section{Noetherian property}
In this section we will study the Noetherian property of $FI_\mathscr{H}$-modules. Assume that $FI_\mathscr{H}$-module $V$ is finitely generated, then the degree of $V$ is also finite. Conversely, let $V$ have degree $\leq d$. If in addition $V_n$ for $n\leq d$ is finitely generated $\mathscr{H}_n$-modules, then $V$ is finitely generated.

\begin{definition}
Given $a\geq0$. Let $V=(V_n,\phi_n)$ be an $FI_\mathscr{H}$-module. We define an $FI_\mathscr{H}$-module $S_{+a}V$ by $(S_{+a}V)_n=V_{n+a}$. As $\mathscr{H}_n$-module, $(S_{+a}V)_n$ is isomorphic to $\Res^{\mathscr{H}_{n+a}}_{\mathscr{H}_n}V_{n+a}$.
\end{definition}

\begin{lemma}\label{Sadec}
Let $m\geq1$ and $a\geq0$. Then there exists a decomposition of $S_{+a}M(m)$:
$$S_{+a}M(m)=M(m)\oplus C_a,$$
where $C_a$ is finitely generated in degree $\leq m-1$.
\end{lemma}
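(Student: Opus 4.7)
The plan is to exhibit the decomposition via a Mackey-type double coset decomposition of $M(m)_{n+a}$ viewed as an $\mathscr{H}_n$-module. Unfolding the definition, $(S_{+a} M(m))_n = M(m)_{n+a} = \mathscr{H}_{n+a}\otimes_{\mathscr{H}_{(m, n+a-m)}} \mathscr{H}_m$. Choosing minimal-length representatives of the double cosets $\mathfrak{S}_n\backslash\mathfrak{S}_{n+a}/\mathfrak{S}_{(m, n+a-m)}$, I would write
$$M(m)_{n+a} = \bigoplus_{\sigma\in\mathscr{D}}\bigl(\mathscr{H}_n T_\sigma \otimes_{\mathscr{H}_{(m, n+a-m)}} \mathscr{H}_m\bigr),$$
and, by an argument parallel to Lemma \ref{Double coset}, for $n\geq m$ these double cosets are indexed stably in $n$ by pairs $(k, J)$ with $0\leq k\leq m$ and $J\subseteq\{1,\ldots,a\}$ of size $m-k$.

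Next, I would identify the summand corresponding to the identity double coset $(k, J) = (m, \emptyset)$ with $\iota_n(M(m)_n)$, where $\iota: M(m)\hookrightarrow S_{+a}M(m)$ is the $FI_\mathscr{H}$-morphism obtained by iterating the structure map $\phi$ of $M(m)$ exactly $a$ times; its injectivity follows from the previously established fact that $M(m)$ has injective degree $0$. I would then define $(C_a)_n$ to be the direct sum of the remaining $(k, J)$-summands, and check that the family $(C_a)_n$ is preserved by the structure map of $S_{+a}M(m)$, yielding a sub-$FI_\mathscr{H}$-module complement. This compatibility step is the main obstacle: it amounts to tracking, under the inclusion $\mathfrak{S}_{n+a}\hookrightarrow\mathfrak{S}_{n+a+1}$ (in the spirit of Lemma \ref{Double coset}), how the double coset representatives evolve, and in particular verifying that the identity representative is preserved so that the complementary family assembles into a genuine $FI_\mathscr{H}$-module.

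Finally, for each fixed nonempty $J\subseteq\{1,\ldots,a\}$, the $J$-indexed summands across varying $n$ should form an $FI_\mathscr{H}$-submodule which, at level $n$, has the shape of an induced module $\Ind_{\mathscr{H}_{(k, n-k)}}^{\mathscr{H}_n}(N_{k, J})$ with $k = m-|J| < m$, for a specific $\mathscr{H}_{(k, n-k)}$-module $N_{k, J}$ coming from the $\sigma$-twist of $\mathscr{H}_m$. Letting $n$ vary, I expect to recognize this as an $FI_\mathscr{H}$-module isomorphic to a finite direct sum of copies of $M(k)$ with $k\leq m-1$. Since there are only finitely many nonempty $J\subseteq\{1,\ldots,a\}$, combining the pieces will yield a decomposition of $C_a$ as a finite direct sum of modules $M(k)$ with $k \leq m-1$; in particular $C_a$ is finitely generated in degree $\leq m-1$, completing the proof.
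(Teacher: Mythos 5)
Your proposal follows essentially the same route as the paper: a Mackey-type double coset decomposition of $M(m)_{n+a}$, identification of the identity-coset summand with $M(m)_n$, and recognition of each remaining coset summand as induced from a proper parabolic subalgebra $\mathscr{H}_{m'}$ with $m'\leq m-1$ (the paper makes this explicit by computing $\sigma^{-1}\mathfrak{S}_{\mu_n}\sigma\cap\mathfrak{S}_m=S_{\{m'+1,\ldots,m\}}$ and further decomposing $\mathscr{H}_m$ over $\mathscr{D}'$). The stability-in-$n$ of the coset representatives that you flag as the main obstacle is exactly what the paper's Lemma \ref{Double coset} supplies, so your plan is sound and matches the paper's argument.
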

\begin{proof}
As we have computed in the previous sections:
\begin{align}
\big(S_{+a}M(m)\big)_n & =M(m)_{n+a}\\
 & =\mathscr{H}_{a+n}\bigotimes_{\mathscr{H}_{\lambda_n}}\mathscr{H}_m\\
 & =\bigoplus_{\sigma\in\mathscr{D}_{\mu_n,\lambda_n}}(\mathscr{H}_{\mu_n}T_{\sigma}\bigotimes_{\mathscr{H}_{\lambda_n}}\mathscr{H}_m),
\end{align}
where $\mu_n=(1,1,\cdots,1,n)$ and $\lambda_n=(m,a+n-m)$.

Let $\sigma\in\mathscr{D}-\{(1)\}$. There exists $m'\in\{1,2,\cdots,m-1\}$ such that $\sigma^{-1}\mathfrak{S}_{\mu_n}\sigma\cap\mathfrak{S}_m=S_{\{m'+1,\cdots,m\}}$.
Then we can decompose $\mathfrak{S}_m=\bigsqcup_{\sigma'\in \mathscr{D}'}S_{\{m'+1,\cdots,m\}}\sigma'\mathfrak{S}_{m'}$, where $\mathscr{D}'$ is the corresponding representative elements of minimal length. So we have:
\begin{align}
\bigoplus_{\sigma\in\mathscr{D}_{\mu_n,\lambda_n}}(\mathscr{H}_{\mu_n}T_{\sigma}\bigotimes_{\mathscr{H}_{\lambda_n}}\mathscr{H}_m)
 &=(\mathscr{H}_{\mu_n}\bigotimes_{\mathscr{H}_{\lambda_n}}\mathscr{H}_m)\bigoplus\Big(\bigoplus_{\sigma\in\mathscr{D}_{\mu_n,\lambda_n}-\{(1)\}}\big(\bigoplus_{\sigma'\in\mathscr{D}'}
(\mathscr{H}_{\mu_n}T_\sigma T_{\sigma'}\bigotimes_{\mathscr{H}_{\lambda_n}}\mathscr{H}_{m'})\big)\Big)\\
 &\cong M(m)_n\oplus C_a,
\end{align}
where $C_a$ is finitely generated in degree $\leq m-1$.
\end{proof}
From the above lemma, we know the degree of $S_{+a}M(m)$ is finite and by lemma\ref{Free}, $M(m)$ can be replaced by an $FI_\mathscr{H}$-module with finite degree. Conversely, according to our definition of $S_{+a}V$, we have

\begin{lemma}
Let $V$ be a consistent sequence. If $S_{+a}V$ is of finite degree, then so is $V$.
\end{lemma}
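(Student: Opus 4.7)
The plan is to prove the slightly more quantitative statement: if $S_{+a}V$ is generated in degree $\leq D$, then $V$ is generated in degree $\leq a+D$; the lemma is immediate from this. The argument is essentially bookkeeping: a generating set for $S_{+a}V$ in low degrees translates directly into a generating set for $V$ shifted up by $a$, after which one glues in the remaining low-degree pieces $V_0,\dots,V_{a-1}$ by hand.

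First I would unpack the definition of $S_{+a}V$ explicitly: $(S_{+a}V)_n = V_{n+a}$, the transition map is $\psi_n = \phi_{n+a}$, and $\mathscr{H}_n$ acts on $(S_{+a}V)_n$ by restricting the $\mathscr{H}_{n+a}$-action on $V_{n+a}$ along the canonical inclusion $\mathscr{H}_n \hookrightarrow \mathscr{H}_{n+a}$, $T_{s_i}\mapsto T_{s_i}$ for $i\leq n-1$. Under this unpacking, the hypothesis that $S_{+a}V$ is generated in degree $\leq D$ says that for every $n\geq 0$, the $\mathscr{H}_n$-module $V_{n+a}$ is generated by $\bigcup_{m\leq \min(n,D)} \phi_{n+a-1,\,m+a}(V_{m+a})$.

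Second, I would upgrade generation over $\mathscr{H}_n$ to generation over $\mathscr{H}_{n+a}$. Since $\mathscr{H}_n$ is a subalgebra of $\mathscr{H}_{n+a}$, any set that generates $V_{n+a}$ over $\mathscr{H}_n$ automatically generates it over $\mathscr{H}_{n+a}$. Reindexing by $k=n+a$, for every $k\geq a$ the module $V_k$ is generated as an $\mathscr{H}_k$-module by the images $\phi_{k-1,j}(V_j)$ with $a\leq j\leq \min(k,a+D)$. The remaining indices $k<a$ are handled tautologically: for $k<a\leq a+D$, the whole module $V_k$ already sits in the candidate generating family $\bigsqcup_{j\leq a+D}V_j$. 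Combining both cases yields $span\big(\bigsqcup_{j\leq a+D}V_j\big)=V$, so $V$ is generated in degree $\leq a+D$.

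The main obstacle, and really the only nontrivial point, is tracking the two Hecke subalgebra inclusions in play: the inclusion $\mathscr{H}_n\hookrightarrow \mathscr{H}_{n+a}$ that defines the action on $S_{+a}V$, versus the inclusion $\mathscr{H}_n\hookrightarrow \mathscr{H}_{n+1}$ appearing in the definition of a consistent sequence. Both are induced by $T_{s_i}\mapsto T_{s_i}$, so they commute and nest compatibly, but one must verify this both to justify that $\psi_n=\phi_{n+a}$ really makes $S_{+a}V$ into an $FI_\mathscr{H}$-module and to guarantee that generation transfers from the smaller algebra to the larger as claimed. Once this compatibility is nailed down, the rest is a purely formal manipulation of the generation condition.
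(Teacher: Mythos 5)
Your proof is correct. The paper itself offers no argument for this lemma at all---it is stated as an immediate consequence of the definition of $S_{+a}V$---so your write-up supplies exactly the missing content, and the quantitative form you prove (if $S_{+a}V$ is generated in degree $\leq D$ then $V$ is generated in degree $\leq a+D$) is the right way to make the assertion precise. The two essential points are both present and sound: generation over the subalgebra $\mathscr{H}_n\subseteq\mathscr{H}_{n+a}$ implies generation over $\mathscr{H}_{n+a}$, and the finitely many low-degree pieces $V_0,\dots,V_{a-1}$ are absorbed into the generating set by hand. One small caveat on the compatibility issue you flag: judging from the paper's use of $\mu_n=(1,\dots,1,n)$ and $S_{\{a+1,\dots,a+n\}}$ in the surrounding lemmas, the intended embedding for $S_{+a}$ is $\mathscr{H}_n\hookrightarrow\mathscr{H}_{n+a}$ via the \emph{last} $n$ letters ($T_{s_i}\mapsto T_{s_{a+i}}$) rather than the first $n$ as you wrote; but since your key step only uses that $\mathscr{H}_n$ sits inside $\mathscr{H}_{n+a}$ as some subalgebra compatible with the transition maps, the argument is insensitive to this choice and goes through under either convention.
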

\begin{theorem}\textbf{(Noetherian Property)}\label{Noetherian property}
Let $V=(V_n,\phi_n)$ be finitely generated and $W\subseteq V$. Then $W$ is also finitely generated.
\end{theorem}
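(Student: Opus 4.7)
The plan is to reduce the theorem to showing that $M(m)$ is Noetherian for each $m \geq 0$, and then prove this by induction on $m$, using the decomposition of Lemma~\ref{Sadec}.

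For the reduction, since $V$ is finitely generated, $V$ is generated in some finite degree $\leq d$ and each $V_n$ is a finitely generated (hence finite-dimensional, since $\mathscr{H}_n$ is finite-dimensional semisimple) $\mathscr{H}_n$-module. Lemma~\ref{Free} then gives an epimorphism $\bigoplus_{i=0}^d M(V_i) \twoheadrightarrow V$. Writing each $V_i$ as a quotient $\mathscr{H}_i^{a_i} \twoheadrightarrow V_i$ and applying the (right-exact) functor $M(-)$ yields a further surjection $\bigoplus_{i=0}^d M(i)^{a_i} \twoheadrightarrow V$. Since Noetherianity (``every submodule is finitely generated'') passes both to quotients and to finite direct sums, it suffices to show that each $M(m)$ is Noetherian.

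For the induction on $m$, the base case $m=0$ is immediate: $M(0)$ is the constant sequence with one-dimensional layers, and any sub-$FI_\mathscr{H}$-module is either zero or finitely generated by a single element. For the inductive step, assume $M(m')$ is Noetherian for every $m'<m$, and let $W \subseteq M(m)$. By Lemma~\ref{Sadec}, for each $a \geq 0$ we have $S_{+a} M(m) \cong M(m) \oplus C_a$, where $C_a$ is generated in degree $\leq m-1$; by the reduction step applied to $C_a$ together with the inductive hypothesis, $C_a$ is Noetherian. Passing to the shifted submodule $S_{+a}W \subseteq M(m)\oplus C_a$, the projection of $S_{+a}W$ onto the $C_a$-summand is then finitely generated. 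I would next invoke the lemma immediately preceding the theorem (``if $S_{+a}V$ has finite degree then so does $V$'') to conclude that $W$ itself has finite degree, provided the $M(m)$-component of $S_{+a}W$ can also be shown to be generated in bounded degree; finite generation of $W$ as an $FI_\mathscr{H}$-module then follows because each $W_n$ is a finite-dimensional $\mathscr{H}_n$-module and so is automatically finitely generated.

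The hard part is precisely that last step: controlling the $M(m)$-summand of $S_{+a}W$. The projection onto $C_a$ is tamed by induction, but the part of $S_{+a}W$ projecting trivially to $C_a$ lives in $M(m)$ itself, which is the ambient object we are trying to understand. To handle this, I would combine the finite weight of $M(m)$ (equal to $m$, by the weight lemma in Section~4), the finite stability degree of $M(m)$ (established in Section~3), and Lemma~\ref{branching lemma}(4) to argue that for sufficiently large shift $a$ the multiplicities of $\mathscr{H}_n$-irreducibles appearing in $W_{n+a}$ must stabilize, ruling out the appearance of new generators in arbitrarily high degree. Translating this stabilization back through the $S_{+a}$-shift, via the lemma quoted above, closes the induction and therefore the proof of the theorem.
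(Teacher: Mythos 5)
Your reduction to the Noetherianity of the modules $M(m)$, the induction on $m$, and the use of the shift $S_{+a}$ together with Lemma~\ref{Sadec} all match the paper's strategy, and the treatment of the summand $C_a$ by the inductive hypothesis is fine. The genuine gap is exactly where you flag it: controlling the $M(m)$-summand of $S_{+a}W$. Your proposed fix --- invoking the weight and stability degree of $M(m)$ together with Lemma~\ref{branching lemma}(4) to argue that multiplicities in $W_{n+a}$ stabilize --- is not carried out, and as sketched it is at risk of circularity: in this paper the passage from ``finite stability degree and weight'' to stabilization of a \emph{general} finitely generated module (Lemma~\ref{stable range} as used in Theorem~\ref{mt}) itself requires the Noetherian property, applied to the kernel of a presentation. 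Knowing the stability degree of the ambient $M(m)$ does not by itself bound the degrees of generators of an arbitrary submodule $W$; that implication is precisely what is being proved.

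The paper closes this gap with a concrete ascending-chain argument that your sketch is missing. Using the injectivity of the structure maps to view $W_n\subseteq W_{n+1}$, one looks at the degree-$m$ layer of the $M(m)$-summand of the shifted module: writing $W^a_m$ for the component of $(S_{+a}W)_m$ lying in the distinguished copy of $M(m)_m=\mathscr{H}_m$, one gets an increasing chain $W^a_m\subseteq W^{a+1}_m\subseteq\mathscr{H}_m$ of subspaces of the \emph{finite-dimensional} algebra $\mathscr{H}_m$. This chain stabilizes at some $N$, and the stabilized subspace $W^N_m$ then generates the entire $M(m)$-component of $S_{+N}W$ in all higher degrees, so that component is generated in degree $m$; the $C_N$-component is handled by your induction on $m$, and the lemma preceding the theorem transfers finite degree from $S_{+N}W$ back to $W$. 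Finite generation then follows, as you say, from finite-dimensionality of each $W_n$. So your outline is the right skeleton, but the single step you defer is the heart of the proof, and the mechanism that actually works is finite-dimensionality of $\mathscr{H}_m$ forcing an ascending chain to terminate, not stabilization of irreducible multiplicities.
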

\begin{proof}
Suppose $V$ is generated in degree $m$.
By lemma\ref{Free}, $V$ is a quotient of a finite direct sum of $M(m_i)$ for $m_i\leq m$. So we only need to prove $M(m_i)$ satisfies the Noetherian property. Let $W\subseteq M(m)$. Since the injective degree of $M(m)$ is zero, we can assume that $M(m)_n\subseteq M(m)_{n+1}$ for all $n$. Since every $W_n$ is finite dimensional, then $W$ has finite degree implies $W$ is finitely generated. To show $\deg{(W)}<\infty$ it suffices to prove it for $S_{+a}W$ for some $a$.

By lemma \ref{Sadec}, $S_{+a}M(m)$ can be decomposed as two summand, one of degree $m$. And so is $S_{+a}W$. Since the map $\phi|_{W_n}: W_n\to W_{n+1}$ is injective, we can assume $W_n\subseteq W_{n+1}$. For any $a$, $\mathscr{H}_m=M(m)_m$ is contained in $S_{+a}M(m)_m$, which is exactly the summand of degree $m$. Let $W^a_m$ be the summand of $S_{+a}W_m$. Then $W^a_m\subseteq W^{a+1}_m\subseteq\mathscr{H}_m$. Since $\mathscr{H}_m$ is finite dimensional, there exists $N$ such that $W^N_m=W^{N+1}_m$. For any $a\geq0$, $W^{N+a}_m$ generates $W^N_{m+a}$ which implies that $W^N$ is finitely generated. For the degree $\leq m-1$ of $S_{+N}W$, we use induction which implies $S_{+N}W$ is finitely generated of degree.

\end{proof}

\section{representation stability}
The following lemma shows that the representation stable range $N<\infty$ if the stability degree and weight of an $FI_\mathscr{H}$-module are finite.
\begin{lemma}\label{stable range}
Let $V=(V_n,\phi_n)$ be an $FI_\mathscr{H}$-module with stability degree $s$ and weight $m$. Then $V$ is uniformly representation stable with stable range $n\geq s+m$.
\end{lemma}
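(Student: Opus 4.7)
The plan is to prove the three conditions of uniform representation stability---injectivity, surjectivity, and multiplicity stability---separately for $n\ge s+m$, each time combining the appropriate form of the stability-degree hypothesis with Lemma~\ref{branching lemma}(4) to reduce the problem to a vanishing that follows from the weight bound.

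For injectivity I set $K_n:=\ker\phi_n$, an $\mathscr{H}_n$-submodule of $V_n$, so of weight $\le m$. Since $T:\Phi_m(V)_{n-m}\to\Phi_m(V)_{n-m+1}$ is an isomorphism for $n-m\ge s$ by the stability-degree hypothesis, any $v\in K_n$ satisfies $T[v]=[\phi_n(v)]=0$, forcing $v\in Q_{n-m}(V_n)$ and hence $K_n\subseteq Q_{n-m}(V_n)$. Because $\mathscr{H}_n$ is semisimple ($q$ is not a root of unity) we may pick an $\mathscr{H}_n$-stable complement $K_n^\perp$ with $V_n=K_n\oplus K_n^\perp$; the operator $Q_{n-m}$ respects this splitting, so $K_n\cap Q_{n-m}(V_n)=Q_{n-m}(K_n)$ and hence $K_n=Q_{n-m}(K_n)$. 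Lemma~\ref{branching lemma}(4) applied to the subobject $K_n$ then forces $K_n=0$. The surjectivity argument is dual: set $U_{n+1}:=\mathscr{H}_{n+1}\phi_n(V_n)$ and $C_{n+1}:=V_{n+1}/U_{n+1}$, a quotient of weight $\le m$. Surjectivity of $T$ supplies, for every $w\in V_{n+1}$, some $v\in V_n$ with $w-\phi_n(v)\in Q_{n+1-m}(V_{n+1})$, so $V_{n+1}=U_{n+1}+Q_{n+1-m}(V_{n+1})$; passing to $C_{n+1}$ yields $C_{n+1}=Q_{n+1-m}(C_{n+1})$, and Lemma~\ref{branching lemma}(4) forces $C_{n+1}=0$, i.e., $V_{n+1}$ is generated as an $\mathscr{H}_{n+1}$-module by $\phi_n(V_n)$.

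For multiplicity stability I induct on $|\nu|$ (from $0$ up to $m$), showing that $c_{\nu,n}$ is independent of $n$ for $n\ge s+m$. Decomposing $V_{a+k}=\bigoplus_\lambda c_{\lambda,a+k}S(\lambda)_{a+k}$ and applying $\Phi_a$, the terms with $|\lambda|>a$ vanish by Lemma~\ref{branching lemma}(1), and by Lemma~\ref{branching lemma}(3) each surviving factor $\Phi_a(S(\lambda))_k$ takes a fixed $\mathscr{H}_a$-module value for $k\ge|\lambda|$, equal to $S^\lambda$ when $a=|\lambda|$ by Lemma~\ref{branching lemma}(2). Taking $a=|\nu|$ and extracting the $S^\nu$-isotypic multiplicity yields an identity
$$[\Phi_{|\nu|}(V)_k:S^\nu]=c_{\nu,\,|\nu|+k}+\sum_{|\lambda|<|\nu|}\beta_{\lambda,\nu}\,c_{\lambda,\,|\nu|+k}$$
for combinatorial constants $\beta_{\lambda,\nu}\in\mathbb{Z}_{\ge 0}$ read off from the branching rule. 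The left-hand side is constant in $k$ for $k\ge s$ by the stability-degree hypothesis, and the sum on the right is constant in $k$ by the inductive hypothesis once $|\nu|+k\ge s+m$; solving for $c_{\nu,\,|\nu|+k}$ gives the desired stability in the range $n\ge s+m$.

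The main technical obstacle I expect to be in the last step: keeping careful track of the range in which the left-hand side of the displayed identity is constant (needing $k\ge s$) and each factor $\Phi_{|\nu|}(S(\lambda))_k$ has attained its stable form (needing $k\ge|\lambda|$), so that the combined condition really collapses to $n\ge s+m$ rather than some strictly larger bound, especially in the delicate case $|\nu|>s$. The injectivity and surjectivity steps are shorter but depend crucially on the semisimplicity of $\mathscr{H}_n$ (to split $V_n=K_n\oplus K_n^\perp$ and let $Q_{n-m}$ distribute across the summands) and on the pointwise reading of Lemma~\ref{branching lemma}(4) as a vanishing statement for subquotients at a single index.
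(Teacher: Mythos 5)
Your proof follows essentially the same route as the paper's: injectivity and surjectivity are reduced, via the stability-degree isomorphism $T:\Phi_m(V)_{n-m}\to\Phi_m(V)_{n-m+1}$, to the vanishing of $\ker\phi_n/Q_{n-m}$ and of the cokernel of $\mathscr{H}_{n+1}\bigotimes_{\mathscr{H}_n}V_n\to V_{n+1}$ modulo $Q_{n+1-m}$, which Lemma~\ref{branching lemma}(4) upgrades to genuine vanishing, and the multiplicities are obtained by combining the stabilization of $\Phi_a(V)$ with the branching data and an induction on $|\lambda|$. If anything you are more explicit than the paper at the two delicate points --- the semisimplicity argument identifying $K_n\cap Q_{n-m}(V_n)$ with $Q_{n-m}(K_n)$, and the isotypic bookkeeping behind the multiplicity induction --- and the range-tracking worry you flag at the end (whether the constants $\beta_{\lambda,\nu}$ have already stabilized for all $n\geq s+m$) is exactly the point the paper itself glosses over with the phrase ``applies the induction,'' so it is not a defect relative to the source.
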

\begin{proof}
Let $\phi_n': \mathscr{H}_{n+1}\bigotimes_{\mathscr{H}_n}V_n\to V_{n+1}$.

In order to indicate the injective and surjectivity, that is to say $\ker{\phi_n}=0$ and $\coker{\phi_n'}=0$ for $n\geq s+m$, it suffices to prove $\ker{\phi_n}/Q_{n-m}=0$ and $\coker{\phi_n'}/Q_{n+1-m}$$=0$ by lemma\ref{branching lemma}.

Since $n-m\geq s$, the map $T: \Phi_m(V)_{n-m}\to\Phi_m(V)_{n+1-m}$ is bijective. Moreover, notice that the following diagrams
$$\xymatrix{
                & V_{n+1}/Q_{n-m} \ar[dr]^{T_2}             \\
 V_n/Q_{n-m} \ar[ur]^{T_1} \ar[rr]^{T} & &     V_{n+1}/Q_{n+1-m}        }
$$
and
$$\xymatrix{
                & (\mathscr{H}_{n+1}\bigotimes_{\mathscr{H}_n}V_n)/Q_{n+1-m} \ar[dr]^{T_2'}             \\
 V_n/Q_{n-m} \ar[ur]^{T_1'} \ar[rr]^{T} & &     V_{n+1}/Q_{n+1-m}        }
$$
commute. So $T_1$ is injective and $T_2'$ is surjective. We have obtained that $\ker{\phi_n}/Q_{n-m}=0$ and $\coker{\phi_n'}/Q_{n+1-m}=0$.

Assume that $V_n=\bigoplus_{\lambda} c_{\lambda,n}S(\lambda)_n$, we need to show $c_{\lambda,n}$ is independent of $n$ when $n\geq s+m$. Since the weight of $V$ is $m$, there is no $\lambda$ with $|\lambda|>m$ occurred in $V$. By lemma\ref{branching lemma}, $S(\lambda)_n/Q_{n-m}=0$ when $|\lambda|>m$. So
\begin{align}
V_n/Q_{n-m} &=\bigoplus_{|\lambda|\leq m}c_{\lambda,n}\big(S(\lambda)_n/Q_{n-m}\big)\\
  &=\bigoplus_{|\lambda|<m}c_{\lambda,n}\big(S(\lambda)_n/Q_{n-m}\big)\bigoplus\Big(\bigoplus_{|\lambda|=m}c_{\lambda,n}\big(S(\lambda)_n/Q_{n-m}\big)\Big)\\
  &=\bigoplus_{|\lambda|<m}c_{\lambda,n}\big(S(\lambda)_n/Q_{n-m}\big)\bigoplus(\bigoplus_{|\lambda|=m}c_{\lambda,n}S^{\lambda}).
\end{align}
Because $n-m$ is great or equal to the stability degree and for the first term of the above decomposition applies the induction, we obtain $c_{\lambda,n}$ is independent of $n$.

\end{proof}

Now we state and prove our main theorem about representation stability associated with Hecke algebra.
\begin{theorem}\label{mt}
An $FI_\mathscr{H}$-module $V=(V_n,\phi_n)$ is finitely generated if and only if the sequence $(V_n, \phi_n)$ is uniformly representation stable and each $V_n$ is finite-dimensional.
\end{theorem}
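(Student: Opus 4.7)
The plan is to prove the two directions separately, deferring almost all of the work on the forward direction to Lemma~\ref{stable range}.

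For the easy direction ($\Leftarrow$), I would assume $V$ is uniformly representation stable with stable range $N$ and that each $V_n$ is finite-dimensional over $k$. The surjectivity axiom forces $V_{n+1}$ to be generated by $\phi_n(V_n)$ for all $n \geq N$, hence $V$ is generated in degree $\leq N$. Since each $V_n$ with $n \leq N$ is finite-dimensional as a $k$-vector space, a finite $k$-basis of $\bigoplus_{n \leq N} V_n$ provides a finite set that generates $V$ as an $FI_\mathscr{H}$-module.

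For the forward direction ($\Rightarrow$), I would verify the three hypotheses of Lemma~\ref{stable range}: finite-dimensionality of each $V_n$, finite weight, and finite stability degree. If $V$ is generated in degree $\leq d$ by a finite set $S$, then Lemma~\ref{Free} gives a surjection $M := \bigoplus_{i=0}^d M(U_i) \twoheadrightarrow V$, where each $U_i$ is the finite-dimensional $\mathscr{H}_i$-submodule of $V_i$ spanned by $S \cap V_i$. Every $V_n$ is then a quotient of the finite-dimensional module $M_n$, and $V$ has weight at most $d$ by the observation made just after the weight lemma.

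The main obstacle is bounding the stability degree of $V$. My plan is to invoke Theorem~\ref{Noetherian property}: the kernel $K := \ker(M \to V)$ is finitely generated, so a second application of Lemma~\ref{Free} yields a surjection $M' \twoheadrightarrow K$ with $M'$ a finite direct sum of modules $M(m'_j)$, and hence a presentation $M' \to M \to V \to 0$. Both $M$ and $M'$ have finite stability degree, obtained by decomposing each $U_i$ (respectively $\mathscr{H}_{m'_j}$) into simple modules and applying the computation stab-deg$(M(S^\lambda)) = \lambda_1$. The functor $\Phi_a$ is right exact (it is coinvariants under the Young subgroup action of $\mathfrak{S}_{\{a+1,\ldots,a+n\}}$), so applying it produces a ladder diagram with right-exact rows and the translation maps $T$ as vertical arrows. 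For $n$ at least the maximum $s$ of the stability degrees of $M$ and $M'$, the two left-most vertical arrows are isomorphisms; a four-lemma diagram chase then forces the vertical map $T : \Phi_a(V)_n \to \Phi_a(V)_{n+1}$ to be an isomorphism as well. This gives stab-deg$(V) \leq s < \infty$, and Lemma~\ref{stable range} delivers the uniform representation stability of $V$.
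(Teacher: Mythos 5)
Your proposal is correct and follows essentially the same strategy as the paper: the backward direction is identical, and the forward direction reduces to Lemma~\ref{stable range} by bounding the weight by the generation degree and invoking the Noetherian property (Theorem~\ref{Noetherian property}) to control the stability degree through a diagram chase on $\Phi_a$. The only real difference is in that final chase: the paper applies the Snake Lemma to $0\to K\to M\to V\to 0$, using exactness of $\Phi_a$, the injective degree $0$ of $M$, and the finite surjective degree of $K$ from Corollary~\ref{corollary}, whereas you pass to a free presentation $M'\to M\to V\to 0$ and run a four-lemma argument that needs only right-exactness of $\Phi_a$ together with the computed stability degrees of $M$ and $M'$ --- an equivalent, slightly more self-contained variant.
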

\begin{proof}
Assume $V$ is uniformly stable with range $N$. Because of the surjectivity of $V$, we have $V_n=span(\im\phi_{n-1})_n$ for all $n-1\geq N$. So $span(\bigsqcup_{n=0}^NV_n)=V$ and together with the finite-dimensional imply that $V$ is finitely generated.

For the converse, by proposition \ref{stable range}, we only need to show $V$ has finite stability degree and weight. According to corollary \ref{corollary}, the surjective degree is finite. Assume that $V$ is generated in degree $\leq d$. There exists an epimorphism $g: M\to V$, where $M=M(W)$ for some $W=(W_n)_{n\geq 0}$. Let $K$ be the kernel of $g$. By corollary\ref{Noetherian property}, $K$ is also finitely generated and so finite surjective degree, say $s$.

For given $a\geq 0$. For any $n\geq s$, we have the following commutative diagram:

$$\CD
  0 @>>> \Phi_a(K)_n @>>> \Phi_a(M)_n @>>> \Phi_a(V)_n @>>> 0 \\
   @.  @V T_2 VV @V T_1 VV @V T VV   \\
  0 @>>> \Phi_a(K)_{n+1} @>>> \Phi_a(M)_{n+1} @>>> \Phi_a(V)_{n+1} @>>> 0
\endCD
$$
By the Snake Lemma we have an exact sequence:

$$\xymatrix@C=0.5cm{
  \ker T_1 \ar[rr]^{} && \ker T \ar[rr]^{} && \coker T_2 \ar[rr]^{} && \coker T_1 \ar[rr]^{} && \coker T \ar[r] & 0 }.$$
So $\ker T_1=0$ and $\coker T_2=0$ imply $\ker T=0$. This shows the injective degree of $V$ is finite. We thus have completed the proof.

\end{proof}





\bibliographystyle{amsplain}

\end{document}